\title{Isoperimetric stability in lattices}
\author{
Ben Barber\thanks{University of Manchester and Heilbronn Institute for Mathematical Research. E-mail \texttt{ben.barber@manchester.ac.uk}}
\and Joshua Erde\thanks{Graz University of Technology, Institute of Discrete Mathematics, Steyrergasse 30, 8010 Graz, Austria. E-mail: \texttt{erde@tugraz.at}}
\and Peter Keevash\thanks{Mathematical Institute, University of Oxford, Andrew Wiles Building, Radcliffe Observatory Quarter, Woodstock Road, Oxford, United Kingdom. E-mail: \texttt{\{keevash, robertsa\}@maths.ox.ac.uk}. \newline \hspace*{1.8em}Research supported
in part by ERC Consolidator Grant 647678.
} \and Alexander Roberts\footnotemark[3]}
\newtheoremstyle{case}{}{}{\normalfont}{}{shape}{:}{ }{}
\newtheorem{thm}{Theorem}[section]
\newtheorem{lem}[thm]{Lemma}
\theoremstyle{definition}
\newtheorem{claim}[thm]{Claim}
\numberwithin{equation}{section}
\newtheoremstyle{case}{}{}{\normalfont}{}{shape}{\normalfont:}{ }{}
\theoremstyle{case}
\def\comment#1{}
\newcommand{\beq}{\begin{eqnarray*}}
\newcommand{\eeq}{\end{eqnarray*}}
\def\build#1_#2^#3{\mathrel{\mathop{\kern 0pt#1}\limits_{#2}^{#3}}}
\newcommand{\bN}{\mathbb{N}}
\newcommand{\bR}{\mathbb{R}}
\newcommand{\bZ}{\mathbb{Z}}
\newcommand{\cA}{\mathcal{A}}
\numberwithin{equation}{section}
\newcommand{\sub}{\subseteq}
\newcommand{\bcl}[1]{\left\lceil #1 \right\rceil}
\newcommand{\sm}{\setminus}
\newcommand{\eps}{\varepsilon}
\newcommand{\es}{\emptyset}
\newcommand{\pl}{\partial}
\newcommand{\sd}{\bigtriangleup}
\newcommand{\ra}{\rightarrow}
\newcommand{\ova}{\overrightarrow}
\newcommand{\gG}{\gamma}
\newcommand{\dD}{\delta}
\newcommand{\kK}{\kappa}
\newcommand{\Ss}{\Sigma}
\DeclareMathOperator{\Per}{Per}
\begin{document}

\maketitle

\begin{abstract}
We obtain isoperimetric stability theorems
for general Cayley digraphs on $\bZ^d$.
For any fixed $B$ that generates $\bZ^d$ over $\bZ$, 
we characterise the approximate structure of large sets $A$
that are approximately isoperimetric in the Cayley digraph
of $B$: we show that $A$ must be close 
to a set of the form $kZ \cap \bZ^d$,
where for the vertex boundary
$Z$ is the conical hull of $B$,
and for the edge boundary
$Z$ is the zonotope generated by $B$.
\end{abstract}

\section{Introduction}

An important theme at the interface of Geometry, Analysis 
and Combinatorics is understanding the structure of
approximate minimisers to isoperimetric problems.
These problems take the general form of minimising 
surface area of sets with a fixed volume,
for various meanings of `area' and `volume'.
The usual meanings give
the Euclidean Isoperimetric Problem
considered since the ancient Greek mathematicians,
where balls are the measurable subsets of $\bR^d$ 
with a given volume which minimize the surface area.
There is a large literature on its stability, i.e.\ 
understanding the structure of approximate minimisers,
culminating in the sharp quantitative
isoperimetric inequality of
Fusco, Maggi and Pratelli \cite{FuMP08}.

In the discrete setting, isoperimetric problems
form a broad area that is widely studied within
Combinatorics (see the surveys \cite{Bez94, Leader91})
and as part of the Concentration of Measure phenomenon
(see \cite{Led05, Tal95}). Furthermore, certain particular 
settings have been intensively studied due to their applications;
for example, there has been considerable recent progress
(see \cite{KL18b,KL18a,KelLif18,VISH-MPAR})
on isoperimetric stability in the discrete cube $\{0,1\}^n$,
which is intimately connected to 
the Analysis of Boolean Functions (see \cite{OD14})
and the Kahn--Kalai Conjecture (see \cite{KK07})
on thresholds for monotone properties
(see \cite{FKNP} for the recent solution
of Talagrand's fractional version).
This paper concerns the setting of integer lattices,
which is widely studied in Additive Combinatorics, where 
the Polynomial Freiman--Ruzsa Conjecture (see \cite{G07})
predicts the structure of sets with small doubling.

For an isoperimetric problem on a digraph (directed graph) $G$,
we measure the `volume' of $A \sub V(G)$ by its size $|A|$,
and its `surface area' either by the \emph{edge boundary}
$\pl_{e,G}(A)$, which is the number of edges $\ova{xy} \in E(G)$
with $x \in A$ and $y \in V(G) \sm A$,
or by the \emph{vertex boundary} $\pl_{v,G}(A)$, 
which is the number of vertices $y \in V(G) \sm A$ 
such that $\ova{xy} \in E(G)$ for some $x \in A$.
Here we consider Cayley digraphs:\ 
given a generating set $B$ 
of $\bZ^d$, we write $G_B$ for the digraph on $\bZ^d$ with
edges $E(G_B) = \{ (u,v): v-u \in B\}$.

It is an open problem to determine the minimum possible
value of $\pl_{v,G_B}(A)$ or $\pl_{e,G_B}(A)$
for $A \sub \bZ^d$ of given size, 
let alone any structural properties 
of (approximate) minimisers;
exact results are only known 
for a few instances of $B$
(see \cite{BL87, BL91, WW77, RV12}).
It is therefore natural to seek asymptotics.
For the vertex boundary, 
this was achieved by Ruzsa \cite{R95},
who showed that for $A \sub \bZ^d$ of size $n \to \infty$,
the minimum value $\pl_{v,G_B}(A)$
is asymptotic to that achieved
by a set of the form $k C(B) \cap \bZ^d$,
where $C(B)$ is the conical hull of $B$,
i.e.\ the convex hull of $B \cup \{0\}$.
A corresponding result for the edge boundary
was obtained in \cite{BE18}:\
the minimum value $\pl_{e,G_B}(A)$
is asymptotic to that achieved
by a set of the form $kZ(B) \cap \bZ^d$, where 
$Z(B) = \{ \sum_{b \in B} bx_b: x \in [0,1]^B \}$ 
is the zonotope generated by $B$.

We will prove stability versions of both these results,
describing the approximate structure of asymptotic minimisers 
for both the vertex and edge isoperimetric problems in $G_B$.
The statements require the following notation.
Given $B \sub \bZ^d$ and $n\in \bN$ 
we write $B_n = (\kK_B(n)C(B)) \cap \bZ^d$, 
where $\kK_B(n) = \min 
 \{k \in \bN: |kC(B) \cap \bZ^d| \ge n\}$.
We also write $Z(B)=C([B])$, 
where $[B] = \{\sum B': B' \sub B\}$.
We use $\mu$ to denote Lebesgue measure.

\begin{thm}\label{thm:lattice-stability}
Let $B$ be a generating set of $\bZ^d$ with $d \ge 2$.
Then there is $K \in \bN$ so that 
for any $A \sub \bZ^d$ with $|A|=n \ge K$ and 
$\pl_{v,G_B}(A) \le d\mu(C(B))^{1/d}n^{1-1/d}(1+\eps)$,
where $Kn^{-1/2d} < \eps < K^{-1}$, 	
there exists $v \in \bZ^d$ with
$|A \sd (v+B_n)| < Kn\sqrt{\eps}$.
\end{thm}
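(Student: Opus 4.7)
The plan is to reduce this discrete stability statement to the quantitative anisotropic isoperimetric inequality of Figalli, Maggi and Pratelli (FMP), which states that for any set $E \sub \bR^d$ of finite perimeter and any convex body $K \ni 0$,
\[ P_K(E) \ge d\,\mu(K)^{1/d}\,\mu(E)^{(d-1)/d}\bigl(1 + c_d\,\aA_K(E)^2\bigr), \]
where $P_K$ is the $K$-anisotropic perimeter and $\aA_K(E)$ is the Fraenkel asymmetry, measuring the smallest relative symmetric difference of $E$ with a translate of the Wulff shape $(\mu(E)/\mu(K))^{1/d}K$. Taking $K = C(B)$, Ruzsa's asymptotic lower bound for $\pl_{v,G_B}$ matches the continuous minimum of $P_K$ on sets of volume $n$, and the quadratic $\aA_K^2$ term is exactly what produces the $\sqrt{\eps}$ scale in the conclusion.

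The heart of the proof is the construction of a continuous proxy $\wt{A} \sub \bR^d$ of Lebesgue measure $n$ associated to $A$ for which $P_K(\wt{A})$ is controlled by $\pl_{v,G_B}(A)$. The naive union-of-cubes choice $\wt{A} = A + [-\tfrac12, \tfrac12)^d$ is unsuitable:\ already for the optimal $A = B_n$, the staircase boundary inflates $P_K(\wt{A})$ by a constant factor (by $d$ in the simplex case $B = \{e_1,\dots,e_d\}$), so the transfer would not preserve the multiplicative near-optimality. Instead one must build $\wt{A}$ from tiles whose facet directions are compatible with those of $K$, assigning to each $a \in A$ a polytope cell whose facets are normal to those of $K$, with neighbouring cells meeting along facets dual to the edges used in $\pl_{v,G_B}(A)$. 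The target estimate is
\[ P_K(\wt{A}) \le \pl_{v,G_B}(A) + O(n^{(d-1)/d - \eta}), \]
for some $\eta > 0$, which ensures that the hypothesis transfers to $P_K(\wt{A}) \le d\mu(K)^{1/d}n^{(d-1)/d}(1+O(\eps))$ in the range $\eps > Kn^{-1/2d}$.

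Applying FMP to $\wt{A}$ then yields $v_0 \in \bR^d$ and $k=(n/\mu(K))^{1/d}$ with $\mu(\wt{A}\sd(v_0+kK)) \le Cn\sqrt{\eps}$. To conclude, I would round $v_0$ to its nearest lattice point $v \in \bZ^d$ and replace $k$ by $\kK_B(n)=k+O(1)$; both adjustments perturb the symmetric difference by $O(n^{(d-1)/d})$, comfortably dominated by $Kn\sqrt{\eps}$ given $\eps > Kn^{-1/2d}$. Passing from symmetric differences in $\bR^d$ to the discrete count $|A\sd(v+B_n)|$ loses only a further lower-order error from the lattice boundary layer.

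The main obstacle is the construction of $\wt{A}$ with the sharp perimeter bound:\ the staircase artefact shows that one cannot use a single $\bZ^d$-periodic cubic tiling of $\bR^d$, and there is in general no periodic decomposition of $\bR^d$ with facet normals exactly those of $C(B)$. One therefore needs a more flexible construction (e.g.\ a multi-tile or partition-of-unity argument) that matches each $y \in (A+B)\setminus A$ with a facet of $\wt{A}$ bearing the weight $\phi_K(\nu)=\max_{b \in B}\langle\nu,b\rangle$. A secondary obstacle is that disconnected or "thin" near-optimal $A$ may not transfer cleanly, possibly requiring a preliminary regularisation or diameter bound before FMP is invoked.
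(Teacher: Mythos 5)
Your high-level strategy --- transfer to the continuous setting and invoke the Figalli--Maggi--Pratelli quantitative anisotropic isoperimetric inequality with $K = C(B)$ --- is the right one, and you have correctly diagnosed the central difficulty: the naive fattening $A + [-\tfrac12,\tfrac12)^d$ inflates the anisotropic perimeter by a constant factor, destroying the multiplicative near-optimality that FMP needs as input. But the proposal stops exactly there. The construction you sketch (a cell decomposition whose facet normals match those of $C(B)$) is, as you yourself note, generally unavailable as a periodic tiling, and no substitute mechanism is supplied; the ``multi-tile or partition-of-unity argument'' is a placeholder, not a proof. Since the whole theorem hinges on producing a continuous set $E$ with $\mu(E)$ close to $n$ and $\Per_{C(B)}(E) \le (1+O(\eps))\, d\mu(C(B))^{1/d} n^{1-1/d}$, this is a genuine gap --- and it is precisely the step the paper identifies as its main new contribution.

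The paper's resolution does not go through any adapted tiling; it smooths in the $C(B)$ direction over a long range and uses additive combinatorics to control the total growth. Pl\"unnecke's inequality gives a nonempty $A' \sub A$ with $|A' + \Ss_{k+p}(B)| \le \alpha^{k+p}|A'|$ for $k = \bcl{n^{1/2d}}$, and Ruzsa's fundamental domain $Q \sub Z(B)$ satisfies $kC(B)+Q+z \sub \Ss_{k+p}(B)+Q$ together with $\mu(X+Q)=|X|$ for $X \sub \bZ^d$. Hence $f(r) := \mu(\overline{A'+Q+rC(B)})$ grows by at most $(\alpha^{k+p}-1)|A'| \approx (1+\eps)k\beta n^{-1/d}|A'|$ over $[0,k]$, so an elementary mean-value lemma yields some $r \in [0,k)$ at which $f'_+(r) = \Per_{C(B)}(A_r)$ is at most the average slope, i.e.\ $(1+O(\eps))\beta n^{-1/d}|A'|$. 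FMP is then applied to $A_r$ (not to a fattening of $A$ itself), Brunn--Minkowski guarantees $|A'| \ge (1+2\eps)^{-d} n$ so the conclusion transfers from $A'$ back to $A$, and the final rounding to a lattice translate proceeds roughly as you describe. To complete your own route you would need either this Pl\"unnecke-plus-smoothing device or a genuinely new construction of $\wt{A}$; neither is present in the proposal. (A minor further issue: your claimed bound $P_K(\wt{A}) \le \pl_{v,G_B}(A) + O(n^{(d-1)/d-\eta})$ would in any case need the two notions of boundary to agree at the level of the leading constant, which is exactly what the staircase obstruction prevents for a single cell shape.)
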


\begin{thm}\label{thm:edge-general}
Let $B$ be a generating set of $\bZ^d$ with $d \ge 2$.
For any $\dD>0$ there are $\eps>0$ and $K\in\bN$ so that
for any $A \sub \bZ^d$ with $|A|=n \ge K$ and 
$\pl_{e,G_B}(A) \le d\mu(C([B]))^{1/d}n^{1-1/d}(1+\eps)$, 
there exists $v \in \bZ^d$ with
$|A \sd (v+[B]_n)| < \dD n$.
\end{thm}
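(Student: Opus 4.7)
My plan is to deduce Theorem~\ref{thm:edge-general} from Theorem~\ref{thm:lattice-stability} applied to the generating set $[B]\sm\{0\}$ of $\bZ^d$. This is natural because $B\sub[B]$ (so $[B]$ generates $\bZ^d$) and the two theorems share extremal data: since $Z(B)=C([B])$, the asymptotic minimum $d\mu(C([B]))^{1/d}n^{1-1/d}$ is the same for both, and the near-extremal shape $[B]_n$ in Theorem~\ref{thm:edge-general} coincides with the shape $\kK_{[B]}(n)C([B])\cap\bZ^d$ produced by Theorem~\ref{thm:lattice-stability} applied to $[B]\sm\{0\}$. The bridge is a comparison lemma: for every $\eta>0$ there exist $\eps_0>0$ and $N_0\in\bN$ such that every $A\sub\bZ^d$ with $|A|=n\geq N_0$ and $\pl_{e,G_B}(A)\leq(1+\eps_0)d\mu(C([B]))^{1/d}n^{1-1/d}$ satisfies $\pl_{v,G_{[B]}}(A)\leq(1+\eta)d\mu(C([B]))^{1/d}n^{1-1/d}$. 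Granting this, Theorem~\ref{thm:edge-general} follows by choosing $\eta=(\dD/K)^2$ with $K$ the constant from Theorem~\ref{thm:lattice-stability}: the comparison upgrades the edge hypothesis to the vertex hypothesis, and Theorem~\ref{thm:lattice-stability} then produces $v$ with $|A\sd(v+[B]_n)|<Kn\sqrt{\eta}=\dD n$.

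For the comparison, the starting observation is the pointwise containment $\pl_{v,G_{[B]}}(A)\sub\pl_{v,G_B}(A)+[B]$: given $y\in\pl_{v,G_{[B]}}(A)$, write $y=x+\sum B'$ with $x\in A$ and $B'\sub B$, fix any ordering of $B'$ to form a walk $x=x_0,x_1,\ldots,x_{|B'|}=y$ in $G_B$, and let $z$ be the first vertex of the walk outside $A$; then $z\in\pl_{v,G_B}(A)$ and $y\in z+[B]$. Combined with $|\pl_{v,G_B}(A)|\leq\pl_{e,G_B}(A)$, this yields the crude bound $\pl_{v,G_{[B]}}(A)\leq 2^{|B|}\pl_{e,G_B}(A)$, in which the constant $2^{|B|}$ is far too lossy.

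The main obstacle is sharpening this factor to $1+o(1)$. The idea is geometric: by the Barber--Erde asymptotic isoperimetric inequality (used here as input, not its stability version), a near-extremal $A$ is, up to $o(n)$ symmetric difference, a translate of a dilate of $Z(B)\cap\bZ^d$, so $\pl_{v,G_B}(A)$ is a thin $(d{-}1)$-dimensional layer. Minkowski-thickening such a layer by the bounded set $[B]$ should only inflate its cardinality by a factor $1+o(1)$: for most pairs $(z,b')$ with $z\in\pl_{v,G_B}(A)$ and $b'\in[B]\sm\{0\}$ the translate $z+b'$ either lies inside $A$ or coincides with another already-counted boundary vertex, and the genuinely new contributions concentrate on a codimension-$2$ sub-boundary of size $O(n^{(d-2)/d})=o(n^{(d-1)/d})$. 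Formalising this ``thickening costs only a codimension-$2$ quantity'' principle --- cleanly done by passing to a continuous Wulff approximant $\wt A\sub\bR^d$ of $A$ with perimeter matching $\pl_{e,G_B}(A)$ and invoking a standard BV estimate for Minkowski-thickening --- is the crux of the argument.
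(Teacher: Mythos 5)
Your high-level plan (reduce the edge problem to the vertex problem for the generating set $[B]$, then invoke Theorem~\ref{thm:lattice-stability}) is the right one and is what the paper does, but the bridge you propose --- the ``comparison lemma'' asserting $\pl_{v,G_{[B]}}(A)\le(1+\eta)d\mu(C([B]))^{1/d}n^{1-1/d}$ for $A$ itself --- is exactly the hard step, and your justification of it does not work. First, it is circular: you derive the lemma from the assertion that a near-extremal $A$ is within $o(n)$ of a translate of a dilate of $Z(B)\cap\bZ^d$, but that assertion \emph{is} Theorem~\ref{thm:edge-general}; the Barber--Erde asymptotic inequality gives only the lower bound on $\pl_{e,G_B}(A)$ and says nothing about the structure of near-minimisers. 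Second, even granting structure, the ``thickening costs only codimension $2$'' step is not a standard estimate here: it requires a continuous set $\wt A$ whose anisotropic perimeter matches $\pl_{e,G_B}(A)$ up to a factor $1+o(1)$, and no such approximant is known for general $B$ (the paper constructs one only in the special case $B=\{\pm e_i\}$ for Theorem~\ref{thm:edge-box}; whether $\pl_{e,G_B}(A)\ge d\mu(Z(B))^{1/d}|A|^{1-1/d}$ even holds for general $B$ is posed as an open problem in the concluding remarks). Your crude bound $2^{|B|}\pl_{e,G_B}(A)$ is correct but, as you note, useless.

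The paper sidesteps the single-step comparison entirely. Using the probabilistic reduction from the proof of Barber--Erde's Theorem~1, one gets $|A+\Ss_s([B])|-|A|<(1+\gG)s\beta n^{1-1/d}$ for a suitable large constant $s$; telescoping this as $\sum_{j=0}^{s-1}\bigl(|A+\Ss_{j+1}([B])|-|A+\Ss_j([B])|\bigr)$ and pigeonholing yields \emph{some} $j<s$ for which $A_+=A+\Ss_j([B])$ satisfies $|A_++[B]|-|A_+|<(1+\gG)\beta n_+^{1-1/d}$. Theorem~\ref{thm:lattice-stability} is then applied to $A_+$ rather than to $A$, and the conclusion transfers back because $A\sub A_+$ and $|A_+\sm A|=O(sn^{1-1/d})=o(n)$. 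In other words, the vertex-boundary hypothesis is verified not for $A$ but for an iterated sumset of $A$ that differs from $A$ by a negligible amount; this is what makes the reduction rigorous without any structural input about $A$. If you want to salvage your route, you would need to either prove your comparison lemma directly (which appears to be at least as hard as the theorem) or adopt this telescoping device.
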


We remark that the `square root dependence'
in Theorem \ref{thm:lattice-stability} 
constitutes a tight quantitative stability result
for the vertex isoperimetric inequality in $G_B$,
as may be seen from an example 
where $B$ consists of the corners of a cube
and $A$ is an appropriate cuboid.
We only give a qualititative statement of our
stability result in Theorem \ref{thm:edge-general} 
for the edge isoperimetric inequality in $G_B$,
as our proof does not provide good quantitative bounds.
On the other hand, we do obtain a tight 
quantitative stability result for certain $B$,
namely those for which $G_B$ is equivalent 
to the $\ell^1$-grid (see Theorem \ref{thm:edge-box} below),
thus giving a new proof of a result of
Ellis, Friedgut, Kindler and Yehudayoff \cite{EFKY}.

In proving our results, 
besides drawing on the methods of \cite{R95}
(particularly Pl\"unnecke's inequality for sumsets)
and \cite{BE18} (a probabilistic reduction to \cite{R95}),
the most significant new contribution of our paper 
is a technique for transforming discrete problems 
to a continuous setting where one 
can apply results from Geometric Measure Theory.
We will employ the sharp estimate on asymmetric index
in terms of anisotropic perimeter with respect
to any convex set $K$ due to
Figalli, Maggi and Pratelli \cite{FMP10}
(building on the case when $K$ is a ball,
established in \cite{FuMP08}).
We consider vertex isoperimetry in the next section
and then edge isoperimetry in the following section.
We conclude the paper by discussing some
potential directions for further research.

\section{Vertex isoperimetry}

This section contains the proof of our sharp
tight quantitative stability result
for the vertex isoperimetric inequality 
in general Cayley digraphs.
We start in the first subsection with a summary
of Ruzsa's approach in \cite{R95}, during which
we record some key lemmas on sumsets and fundamental
domains of lattices that we will also use in our proof.
In the second subsection we state the
Geometric Measure Theory result of \cite{FMP10}
(in a simplified setting that suffices for our purposes).
The third subsection contains a technical lemma
in elementary Real Analysis. We conclude in the final
subsection by proving Theorem \ref{thm:lattice-stability}.

\subsection{Ruzsa's approach}

The sumset of $A,B \in \bZ^d$ is defined 
by $A+B:=\{a+b:a \in A, b\in B\}$.
The vertex isoperimetric problem
in the Cayley digraph $G_B$
is equivalent to finding the minimum of $|A+B|$ 
over all sets $A$ of given size.
The following result of
Ruzsa \cite[Theorem 2]{R95} 
implies an asymptotic for this minimum.

\begin{thm}\label{vertex:extreme}
Let $B$ be a generating set of $\bZ^d$ with $d \ge 2$.
Then for any $A \sub \bZ^d$ with $|A|=n$ large we have
$|A+B| \ge d\mu(C(B))^{1/d}n^{1-1/d}(1-O(n^{-1/2d}))$.
\end{thm}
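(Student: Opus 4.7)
The plan is to combine Plünnecke's inequality for sumsets in $\bZ^d$ with the Brunn-Minkowski inequality in $\bR^d$, bridged by a fundamental-domain argument that transports the discrete count $|A+kB|$ into a continuous Minkowski sum.

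First I would invoke Plünnecke's inequality in the form $|A+kB|\le(|A+B|/|A|)^k|A|$ for every $k\in\bN$, which rearranges to $|A+B|\ge|A|^{1-1/k}|A+kB|^{1/k}$. The remaining task is to obtain a sharp lower bound on $|A+kB|$ for a well-chosen $k$, and then to optimise.

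To lower bound $|A+kB|$, I would pass from discrete to continuous. With $Q=[0,1)^d$ serving as a fundamental domain of $\bZ^d$, the inflated set $\wt A:=A+Q$ has $\mu(\wt A)=n$, and more generally $|S|=\mu(S+Q)$ for every $S\sub\bZ^d$. Since $B$ generates $\bZ^d$, the iterated sumset $kB$ fills the lattice polytope $kC(B)\cap\bZ^d$ up to a boundary layer of size $O(k^{d-1})$; this is a Khovanskii-type structural fact about iterated sumsets and is the key quantitative input to the argument. Combining both observations would give $|A+kB|\ge\mu(\wt A+kC(B))-O(k^{d-1})$, and the Brunn-Minkowski inequality in $\bR^d$ then yields $\mu(\wt A+kC(B))^{1/d}\ge n^{1/d}+k\mu(C(B))^{1/d}$, so that $|A+kB|\ge(n^{1/d}+k\mu(C(B))^{1/d})^d-O(k^{d-1})$.

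Writing $c=\mu(C(B))^{1/d}$ and substituting back into the Plünnecke bound, the expression $n^{1-1/k}(n^{1/d}+kc)^{d/k}$ simplifies to $n(1+kc/n^{1/d})^{d/k}$. Taylor-expanding in the regime $k\ll n^{1/d}$ gives $n+dcn^{1-1/d}+O(kn^{1-2/d})$, while the boundary error contributes an additional term of smaller order. The choice $k\asymp n^{1/2d}$ balances these corrections and yields $|A+B|\ge n+dcn^{1-1/d}(1-O(n^{-1/2d}))$, which in particular implies the stated bound. The main obstacle will be the quantitative discrete-to-continuous comparison: establishing that $kB$ differs from $kC(B)\cap\bZ^d$ by only an $O(k^{d-1})$ boundary layer, and that this discrepancy survives the Minkowski sum with $\wt A$ with only an additive error of the same order, is the technical heart of the argument and is what drives the exponent $n^{-1/2d}$ in the final error term.
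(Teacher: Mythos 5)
Your overall strategy (Pl\"unnecke, then a discrete-to-continuous passage, then Brunn--Minkowski, then optimise $k\asymp n^{1/2d}$) is exactly Ruzsa's, which is the route the paper summarises in Section 2.1. However, there are two gaps, one of which is fatal as written. First, the minor one: Pl\"unnecke's inequality does not give $|A+\Ss_k(B)|\le\alpha^k|A|$ for the full set $A$; it only provides a nonempty subset $A'\sub A$ with $|A'+\Ss_k(B)|\le\alpha^k|A'|$ (Theorem \ref{stat:ruz1}), and the subset really is needed in general. This is harmless here, because the chain of inequalities only uses $|A'|\le n$ at the end: from $\alpha^k\ge(1+k\mu(C(B))^{1/d}|A'|^{-1/d})^d$ the bound on $\alpha$ only improves as $|A'|$ shrinks. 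So replace $A$ by $A'$ throughout and this step is fine.

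The serious gap is the step $|A+\Ss_k(B)|\ge\mu(\wt A+kC(B))-O(k^{d-1})$. Granting the Khovanskii-type fact that $\Ss_k(B)\sups (kC(B)\cap\bZ^d)\sm E_k$ with $|E_k|=O(k^{d-1})$ (which itself needs $0\in B$ or a translate --- for $B=\{e_1,e_2,e_1+e_2\}$ the set $\Ss_k(B)$ is only half of $kC(B)\cap\bZ^2$), the discrepancy does \emph{not} survive the Minkowski sum with $A$ at the same order: the natural estimate is $|A+\Ss_k(B)|\ge|A+(kC(B)\cap\bZ^d)|-|A+E_k|\ge|A+(kC(B)\cap\bZ^d)|-|A|\,|E_k|$, i.e.\ an error of order $nk^{d-1}$, and for spread-out $A$ the sets $a+E_k$ really can be essentially disjoint from $A+\Ss_k(B)$, so this loss is genuine. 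With $k\asymp n^{1/2d}$ the required comparison is to a main term whose relevant part is only $kn^{1-1/d}$, and $nk^{d-1}\gg kn^{1-1/d}$ for every $d\ge2$, so the argument collapses; no choice of $k$ repairs it. This is precisely what Lemma \ref{lem:ruz1} is for: Ruzsa constructs a fundamental domain $Q\sub Z(B)$ (not $[0,1)^d$) together with bounded $p\in\bN$, $z\in\bZ^d$ such that $kC(B)+Q+z\sub\Ss_{k+p}(B)+Q$ \emph{exactly}, so that $|A'+\Ss_{k+p}(B)|=\mu(A'+\Ss_{k+p}(B)+Q)\ge\mu(A'+Q+kC(B))$ with no additive error at all; the boundary discrepancy is absorbed into replacing $k$ by $k+p$, which costs only a factor $k/(k+p)=1-O(n^{-1/2d})$. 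You correctly identified this comparison as the technical heart, but the resolution you propose for it does not work, and the containment-after-thickening device is the missing idea.
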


A key ingredient of Ruzsa's proof is the following
well-known inequality of Pl\"{u}nnecke \cite{P70}
(see \cite[Statement 6.2]{R95}).
We use the notation $\Ss_k(A)$ for the $k$-fold sumset of $A$
rather than the commonly used $kA$, which in this paper
denotes the dilate of $A$ by factor $k$.
 
\begin{thm}\label{stat:ruz1}
Let $k \in \bN$ and $A,B \sub \bZ^d$
with $|A|=n$ and $|A+B| = \alpha n$. 
Then there is $\es \ne A' \sub A$ with
$|A' + \Ss_k(B)| \le \alpha^k|A'|$.
\end{thm}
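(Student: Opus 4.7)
The plan is to follow Petridis's short proof of Pl\"unnecke's inequality: choose a single $A' \sub A$ once and for all, then derive the bound for every $k$ by iterating a one-step inequality. Let $A'$ be any nonempty subset of $A$ minimising the ratio $\alpha' := |A'+B|/|A'|$. Since $A$ itself is a candidate, $\alpha' \le \alpha$, and by the defining minimality property every nonempty $Y \sub A$ satisfies $|Y+B| \ge \alpha'|Y|$.

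The engine of the proof is the inequality
$$|A'+B+C| \le \alpha'\,|A'+C| \quad \text{for every finite } C \sub \bZ^d,$$
which I would establish by induction on $|C|$. The base case $|C|=1$ is immediate by translation invariance. For the inductive step, write $C' = C \cup \{\gamma\}$ with $\gamma \notin C$ and set
$$Y := \{a \in A' : a+B+\gamma \sub A'+B+C\} \sub A'.$$
Since $Y+B+\gamma$ is contained in both $A'+B+\gamma$ and $A'+B+C$,
$$|A'+B+C'| \le |A'+B+C| + |A'+B| - |Y+B| \le \alpha'\,|A'+C| + \alpha'(|A'|-|Y|),$$
using $|Y+B| \ge \alpha'|Y|$ (trivially so if $Y=\es$) together with the induction hypothesis. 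On the other hand, defining $Y' := \{a \in A' : a+\gamma \in A'+C\}$, any $a \in Y'$ satisfies $a+B+\gamma \sub A'+C+B$ and hence $a \in Y$; therefore $Y' \sub Y$ and
$$|A'+C'| = |A'+C| + |A'| - |Y'| \ge |A'+C| + |A'| - |Y|.$$
Multiplying by $\alpha'$ and comparing yields $|A'+B+C'| \le \alpha'|A'+C'|$, completing the induction.

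Applying the engine inequality successively with $C = \Ss_{k-1}(B), \Ss_{k-2}(B), \dots, \{0\}$ gives
$$|A'+\Ss_k(B)| = |A'+B+\Ss_{k-1}(B)| \le \alpha'|A'+\Ss_{k-1}(B)| \le \cdots \le (\alpha')^k|A'| \le \alpha^k|A'|,$$
as required. The main obstacle is the engine inequality itself, where the set $Y$ plays a twin role: the lower bound $|Y+B| \ge \alpha'|Y|$ (coming from the minimality of $A'$) controls the increment $|A'+B+C'|-|A'+B+C|$ from above, while the inclusion $Y' \sub Y$ simultaneously forces a matching lower bound on $|A'+C'|-|A'+C|$ in terms of the \emph{same} quantity $|A'|-|Y|$. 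Once this coupling is spotted, everything else is bookkeeping.
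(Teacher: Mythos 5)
Your proof is correct and complete: it is Petridis's argument for Pl\"unnecke's inequality, and every step checks out --- the minimising choice of $A'$ exists because $A$ is finite, the inequality $|Y+B|\ge\alpha'|Y|$ is legitimately applied to $Y\sub A'\sub A$ (including the degenerate case $Y=\es$), the inclusion $Y'\sub Y$ uses only commutativity of $\bZ^d$, and the iteration over $C=\Ss_{k-1}(B),\dots,\{0\}$ delivers $(\alpha')^k|A'|\le\alpha^k|A'|$. The paper itself offers no proof of this statement: it is quoted as a known result of Pl\"unnecke, via Ruzsa's formulation \cite[Statement 6.2]{R95}. Those original arguments run through a quite different framework --- commutative layered graphs, magnification ratios, and either Menger's theorem or a tensor-power/Cartesian-product trick to multiply the one-layer bound up to $k$ layers --- so your route is genuinely different from the one the citation points to. What your approach buys is self-containedness and brevity: a single well-chosen $A'$ (the minimiser of $|A'+B|/|A'|$) works simultaneously for all $k$, and the whole argument is an elementary induction with no graph-theoretic machinery. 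What it gives up is negligible here, since the paper only needs the statement as quoted; one could note that the graph-theoretic version generalises to other "commutative graph" settings, but for sumsets in $\bZ^d$ your proof is the cleaner choice.
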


Given $A$ of size $n$ with $|A+B|$ small
and any $k' \in \bN$,
Theorem \ref{stat:ruz1} provides $A' \sub A$
such that $|A' + \Ss_{k'}(B)|$ is small.
The next step is to convert this into smallness
of a Minkowski sum $U+V = \{u+v: u \in U, v \in V\}$
of two bodies $U,V$ in $\bR^d$.
The final bound in Theorem \ref{vertex:extreme}
will then follow from the Brunn--Minkowski inequality
$\mu(U+V)^{1/d} \ge \mu(U)^{1/d}+\mu(V)^{1/d}$
(in the form due to Lusternik \cite{L35}).

The required $U$ and $V$ are obtained 
as $U=A'+Q$ and $V=k C(B)$, taking $k'=k+p$
with $p$ and $Q$ as in the next lemma,
where $k$ is a free parameter that can be optimised
to obtain the $O(n^{-1/2d})$ error term
in Theorem \ref{vertex:extreme}.
This lemma, which is \cite[Lemma 11.2]{R95},
provides a certain set $Q  \sub \bR^d$
with four key properties. Firstly,
it is {\em fundamental}, i.e.\ any $x \in \bR^d$
has a unique representation as $x=y+z$
with $y \in Q$ and $z \in \bZ^d$,
and so $\mu(X+Q)=|X|$ for any $X \sub \bZ^d$.
Secondly, the conclusion of the lemma
shows that $U+V$ as above is small
when $A'+\Ss_{k'}(B)$ is small.
The remaining two properties are
not explicitly stated in \cite{R95},
but are clear from the proof; we have $Q \sub Z(B)$,
and a certain technical condition,
which for brevity we will call {\em nice}:\
$Q$ is a finite union of bounded convex polytopes.
(In fact, one can take $Q$ to be a parallelepiped;
we omit the proof of this fact, as we do not use it.)

\begin{lem}\label{lem:ruz1}
Let $B$ be a generating set 
of $\bZ^d$ with $d \ge 2$ and $0 \in B$.
Then there are $p \in \bN$, $z \in \bZ^d$
and a nice fundamental set $Q \sub Z(B)$ such that
$k C(B) + Q + z \sub \Ss_{k+p}(B) + Q$ for any $k \in \bN$.
\end{lem}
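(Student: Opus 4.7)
The plan is to construct $Q$ as a specific fundamental domain for $\bZ^d$ sitting inside the zonotope $Z(B)$, and then to verify the Minkowski-sum inclusion via a Carath\'eodory-type integer decomposition together with a padding argument relying on the generating property of $B$.

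First I would use the hypothesis that $B$ generates $\bZ^d$ to pick a $\bR^d$-basis $\{b_1,\ldots,b_d\} \sub B$. The sublattice $L = \bZ\langle b_1,\ldots,b_d\rangle$ has finite index $N = [\bZ^d : L]$, and the half-open parallelepiped $P = \{\sum_i t_i b_i : t_i \in [0,1)\} \sub Z(B)$ is a fundamental domain for $L$ of measure $N$. I would partition $P$ into $N$ convex sub-parallelepipeds $P_1,\ldots,P_N$ of unit measure, and select coset representatives $v_1,\ldots,v_N$ of $\bZ^d/L$, each realized as a non-negative integer $B$-combination, with each translate $P_j + v_j$ lying inside $Z(B)$. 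Taking $Q := \bigcup_j (P_j + v_j)$ then yields a fundamental domain for $\bZ^d$ inside $Z(B)$, built from a finite union of bounded convex polytopes.

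Next I would verify the containment. Given $v \in k C(B)$ and $q \in Q$, Carath\'eodory applied to $\mathrm{conv}(B \cup \{0\})$ gives $v = \sum_b \alpha_b b$ with $\alpha_b \ge 0$ and $\sum_b \alpha_b \le k$. Writing $\alpha_b = m_b + f_b$ with $m_b \in \bZ_{\ge 0}$ and $f_b \in [0,1)$, we get $v = \sum_b m_b b + \sum_b f_b b$ where $\sum_b f_b b \in Z(B)$. Combined with $q \in Q \sub Z(B)$, the fractional piece $q + \sum_b f_b b$ lies in the bounded set $Z(B) + Z(B) = 2Z(B)$, and by fundamentality of $Q$ it decomposes uniquely as $q' + u$ with $q' \in Q$ and $u \in \bZ^d$ drawn from a bounded, hence finite, set of residues. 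Then $v + q + z - q' = \sum_b m_b b + u + z \in \bZ^d$, and the task reduces to showing this integer vector lies in $\Ss_{k+p}(B)$.

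The main obstacle is this last padding step: one must choose $z \in \bZ^d$ and $p \in \bN$ (independently of $k$) so that $\sum_b m_b b + u + z \in \Ss_{k+p}(B)$ for every admissible pair $(m_b, u)$ arising above. The idea is to take $z$ as a large fixed non-negative integer $B$-combination, chosen big enough that $z + u$ has a non-negative $B$-representation for each of the finitely many residues $u$, and then to use non-trivial $\bZ$-linear relations among the elements of $B$ (afforded by $B$ generating $\bZ^d$) to tune the total number of summands to match $k + p$, absorbing the slack $k - \sum_b m_b$. Coordinating $p$ and $z$ so that this matching succeeds uniformly in $k$, $v$, and $q$ is the crux of the argument, and is precisely where the generating hypothesis enters in full force.
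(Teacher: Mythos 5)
The paper does not actually prove this lemma: it is imported verbatim as \cite[Lemma 11.2]{R95}, with the two additional properties ($Q \sub Z(B)$ and niceness) merely read off from Ruzsa's proof. So you are supplying an argument the paper omits, and your overall strategy (a nice fundamental domain inside the zonotope plus an integer/fractional splitting of points of $kC(B)$) is the right one. However, there are two genuine gaps. The first is the construction of $Q$: an arbitrary partition of $P$ into unit-volume convex pieces $P_1,\dots,P_N$, translated by arbitrary coset representatives $v_1,\dots,v_N$ of $\bZ^d/L$, is \emph{not} a fundamental domain for $\bZ^d$, because nothing prevents two translated pieces from coinciding modulo $\bZ^d$. (Already with $L=2\bZ$, $P=[0,2)$, $P_1=[0,1)$, $P_2=[1,2)$, $v_1=0$, $v_2=1$, the set $[0,1)\cup[2,3)$ double-covers $[0,1)$ mod $1$ and misses half the circle.) The pieces must be cut according to the congruence structure, not by volume: e.g.\ partition $[0,1)^d$ into the finitely many polytopes $[0,1)^d\cap(P+\ell)$, $\ell\in L$, and translate each by $-\ell$ into $P\sub Z(B)$; this is disjoint and tiles under $\bZ^d$, and the extra demand that the $v_j$ be non-negative $B$-combinations is then unnecessary (and in your version possibly unsatisfiable together with $P_j+v_j\sub Z(B)$).

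The second gap is the padding step, which you explicitly flag as ``the crux'' and do not carry out --- and as stated it cannot be carried out, because the lemma is false without an extra hypothesis. After your splitting, $v+q+z=\sum_b m_b b+(u+z)+q'$ with $\sum_b m_b\le k$, and the slack $k-\sum_b m_b$ is unbounded (take $v=0\in kC(B)$); a fixed budget of $p$ extra summands cannot absorb it unless $0$ can be padded for free. Concretely, if $B$ lies in an open halfspace $\{x:\langle x,u\rangle>0\}$ --- e.g.\ $B=\{e_1,e_2,e_1+2e_2\}$, which generates $\bZ^2$ --- then $\Ss_{k+p}(B)\sub\{x:\langle x,u\rangle\ge k+p\}$ recedes to infinity while $0+Q+z$ stays in a fixed bounded set, so the inclusion fails for large $k$. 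The lemma implicitly requires $0\in B$, which is exactly the normalisation the paper makes just before invoking it. Once $0\in B$ is assumed, your ``crux'' evaporates: $\Ss_m(B)\sub\Ss_k(B)$ for $m\le k$, the residues $u$ range over the finite set $\bZ^d\cap(2Z(B)-Q)$, and one takes $z=M\sum_{b\in B}b$ with $M$ large enough that every $u+z$ is a non-negative $B$-combination, padding with $0$'s to a common length $p$. You should state this reduction to $0\in B$ and then execute the now-routine padding, rather than leaving it as an open coordination problem.
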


\subsection{Some Geometric Measure Theory}

Here we give a brief account of the quantitative 
isoperimetric stability result 
of Figalli, Maggi and Pratelli \cite{FMP10}.
We adopt simplified definitions 
that suffice for sets that are nice,
as defined in the previous subsection
(see \cite{M08,M12} for the general setting 
of sets of finite perimeter).

For a closed convex polytope $K\subseteq \bR^d$ and a union $E$ of disjoint (possibly non-convex) closed polytopes, the perimeter of $E$ with respect to $K$ is given by
\begin{align}
\Per_K(E) = \lim_{\eps \ra 0^+}
\frac{\mu(E+\eps K)-\mu(E)}{\eps}.\label{perim0}
\end{align}
In our setting given a nice set $A$, for all $r \ge 0$, the measure of $A+rK$ and its closure $\overline{A+rK}$ are the same, that is $\mu(A+rK) = \mu(\overline{A+rK}).$ Thus for all $r\ge 0$, \eqref{perim0} gives
\begin{align}
\Per_K(\overline{A+rK}) = \lim_{\eps \ra 0^+}\frac{\mu(A+(r+\eps)K)-\mu(A+rK)}{\eps}. \label{perim}
\end{align}

The anisotropic isoperimetric problem was
posed in 1901 by  Wulff \cite{W01}, who conjectured
that minimisers of $\mbox{Per}_K$ up to null sets
are homothetic copies of $K$, giving 
$\Per_K(E) \ge d\mu(K)^{1/d}\mu(E)^{1-1/d}$. 
This was established for sets $E$ with continuous boundary 
by Dinghas \cite{D44} and for general sets $E$ 
of finite perimeter by Gromov \cite{MS86}.
It is equivalent to non-negativity
of the \emph{isoperimetric deficit} 
$\dD_K(E)$ of $E$ with respect to $K$, 
defined by
\[	\dD_K(E) := \frac{\Per_K(E)}{d\mu(K)^{1/d}\mu(E)^{1-1/d}}-1. \]
We quantify the structural similarity between $K$ and $E$
via the \emph{asymmetric index} (also known as Fraenkel asymmetry)
of $E$ with respect to $K$, which is given by
\[ 	\cA_K(E) 
= \inf\left\{\frac{\mu(E \sd (x_0 + rK))}{\mu(E)} 
: x_0 \in \bR^d \text{ and } r^d\mu(K) = \mu(E)\right\}. \]
The following is \cite[Theorem 1.1]{FMP10}.

\begin{thm}\label{shouldersofgiants}
For any $d \in \bN$ there exists $D = D(d)$
such that for any bounded convex open set $K \sub \bR^d$
and $E \sub \bR^d$ of finite perimeter we have
	\begin{align*}
		\cA_K(E) \le D\sqrt{\delta_K(E)}.
	\end{align*}
\end{thm}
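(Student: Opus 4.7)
The plan is to adapt Gromov's mass-transport proof of the anisotropic isoperimetric inequality and turn it into a quantitative statement, as in Figalli-Maggi-Pratelli. Rescale so that $K' = rK$ satisfies $\mu(K') = \mu(E)$, and let $T\colon E \to K'$ be the Brenier map---the gradient of a convex potential pushing $\mathbf{1}_E\, d\mu$ onto $\mathbf{1}_{K'}\, d\mu$. The change-of-variables formula gives $\det(DT) = 1$ a.e.\ on $E$, and since $DT$ is symmetric positive semidefinite a.e., the AM-GM inequality yields $\mathrm{div}(T)/d \ge (\det DT)^{1/d} = 1$. Integrating, applying the divergence theorem, and using $T(E) \sub K'$ (so that $T \cdot \nu_E \le h_{K'}(\nu_E) = r\, h_K(\nu_E)$ with $h_K$ the support function of $K$), one obtains
\[ r\,\Per_K(E) \ge \int_{\partial^* E} T \cdot \nu_E \, d\mathcal{H}^{d-1} = \int_E \mathrm{div}(T)\, d\mu \ge d\mu(E), \]
which recovers the anisotropic isoperimetric inequality $\Per_K(E) \ge d\mu(K)^{1/d}\mu(E)^{1-1/d}$ via $r^d \mu(K) = \mu(E)$. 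In particular, $\delta_K(E)$ simultaneously controls the AM-GM slack $\int_E (\mathrm{div}(T)/d - 1)\, d\mu$ and the boundary slack $\int_{\partial^* E}(h_{K'}(\nu_E) - T \cdot \nu_E)\, d\mathcal{H}^{d-1}$.

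Next I would quantify the AM-GM slack. Expanding to second order near the identity gives $\mathrm{div}(T)/d - 1 \gtrsim \|DT - I\|^2$ whenever the eigenvalues of $DT$ stay bounded away from $0$ and $\infty$, yielding an estimate of the form
\[ \|DT - I\|_{L^2(E)}^2 \lesssim \mu(E)\, \delta_K(E) \]
on such a region. A Poincaré-type inequality on $E$ would then convert this into an $L^1$ bound on $T - \mathrm{id}$, and since $T$ pushes $E$ onto $K'$, such a bound would imply $\mu(E \sd (x_0 + K')) \lesssim \mu(E)\sqrt{\delta_K(E)}$ for an appropriate $x_0 \in \bR^d$, which is the desired conclusion.

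The hardest part will be the lack of a priori regularity of $E$: as a general set of finite perimeter, $E$ may be disconnected or irregular, with no uniform Poincaré constant and no uniform bound on the eigenvalues of $DT$. The proof therefore reduces to the case where $E$ is already qualitatively close to a translate of $K'$, in two steps. First, a concentration argument shows that the mass of $E$ cannot split into two pieces of comparable size without creating excess perimeter beyond what $\delta_K(E)$ allows, so $E$ has a single dominant component. Second, a soft compactness argument---contradiction with a sequence for which $\delta_K \to 0$ but the asymmetry stays bounded below---places the dominant component within $L^1$-distance of some translate of $K'$. Only then is a uniform Poincaré inequality available and the eigenvalues of $DT$ controllable on a large subset, so that the chain of estimates closes with a constant $D = D(d)$ independent of $K$ and $E$. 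The bulk of the technical work lies in this reduction and in ensuring the universal nature of the constant.
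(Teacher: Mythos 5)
This theorem is not proved in the paper: it is quoted verbatim as Theorem~1.1 of \cite{FMP10}, a deep result in Geometric Measure Theory whose proof occupies most of that (long) paper. Your outline correctly identifies the strategy used there --- Gromov's mass-transport proof of the anisotropic isoperimetric inequality via the Brenier map, with the deficit $\delta_K(E)$ controlling both the arithmetic--geometric mean slack and the boundary slack $\int_{\partial^* E}\bigl(h_{K'}(\nu_E)-T\cdot\nu_E\bigr)\,d\mathcal{H}^{d-1}$ --- so as a road map it is faithful. However, essentially all of the content of the theorem lies in the steps you defer, and several of them would not close as stated.

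Concretely: (i) the pointwise bound $\mathrm{div}(T)/d-1\gtrsim\|DT-I\|^2$ needs the eigenvalues of $DT$ to be bounded above, which is exactly what is unavailable for general $E$; the unconditional inequality one actually has is $\tfrac1d\sum_i\lambda_i-(\prod_i\lambda_i)^{1/d}\ge c(d)\sum_i(\sqrt{\lambda_i}-1)^2$, forcing one to work with $\sqrt{DT}$, and moreover $T$ is only of bounded variation, so $DT$ is a measure and the divergence theorem acquires a singular term. (ii) Your route from $\|DT-I\|_{L^2(E)}^2\lesssim\mu(E)\,\delta_K(E)$ via Poincar\'e and Chebyshev to the symmetric difference provably loses a power: displacing all but a $t^{-1}\sqrt{\delta}$-fraction of $E$ by at most $t$ into $K'$ and optimising over $t$ yields only $\cA_K(E)\lesssim\delta_K(E)^{1/4}$. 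The sharp exponent in \cite{FMP10} comes instead from the boundary slack combined with a trace inequality on $\partial^* E$ bounding $\int_{\partial^* E}|T-x-c|\,d\mathcal{H}^{d-1}$ by the total variation of $DT-I$; this is the step your interior Poincar\'e inequality cannot replace. (iii) Uniformity in $K$ requires first normalising $K$ by John's lemma (both $\cA_K$ and $\delta_K$ are affine invariant); and even granting a compactness reduction placing $E$ within small $L^1$-distance of a translate of $K'$, such a set can still have arbitrarily bad trace/Poincar\'e constants (thin tentacles of negligible measure), so the reduction you need is strictly stronger than what soft compactness provides --- \cite{FMP10} achieve it by a quantitative selection of a large subset of $E$ with controlled trace constant, which is the technical heart of their argument. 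In short, the proposal is a correct summary of the known proof strategy but, as a proof, has genuine gaps at each of the quantitative steps.
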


\subsection{Some Real Analysis}

In this subsection we establish the following
technical lemma in elementary Real Analysis,
which will allow us to pass to the setting
of perimeters in Ruzsa's approach as described
in the first subsection, so that we can apply
the result of the second subsection. We presume the result is well-known,
but we include a proof for completeness.

\begin{lem}\label{basicanalysis}
Let $f:[a,b] \ra \bR$ be continuous
and right differentiable.
Then for any $\eps>0$ there is $x \in [a,b)$ with 
$f'_+(x) \le \tfrac{f(b)-f(a)}{b-a} + \eps$.
\end{lem}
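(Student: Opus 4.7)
The plan is to argue by contradiction. Set $M := (f(b)-f(a))/(b-a)$ and suppose $f'_+(x) > M + \eps$ for every $x \in [a,b)$. Define the auxiliary continuous function $g \colon [a,b] \to \bR$ by
\[
g(x) := f(x) - (M+\eps)(x-a).
\]
A direct computation gives $g(b) - g(a) = -\eps(b-a) < 0$, while by construction $g'_+(x) = f'_+(x) - (M+\eps) > 0$ for every $x \in [a,b)$. Thus the task is reduced to establishing a one-sided analogue of the Mean Value Theorem: a continuous function $g$ on $[a,b]$ with strictly positive right derivative throughout $[a,b)$ must satisfy $g(b) \ge g(a)$.

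To prove this sub-statement I would use a standard topological supremum argument. Consider the set $S := \{x \in [a,b] : g(x) \ge g(a)\}$, which contains $a$ and is closed in $[a,b]$ by continuity of $g$; in particular $c := \sup S$ belongs to $S$, so $g(c) \ge g(a)$. If $c = b$ we are done. Otherwise $c < b$, and by definition of $c$ we have $g(x) < g(a) \le g(c)$ for every $x \in (c,b]$; the difference quotient $(g(x)-g(c))/(x-c)$ is then strictly negative on $(c,b]$, which forces $g'_+(c) \le 0$ upon taking $x \to c^+$, contradicting the hypothesis. This yields $g(b) \ge g(a)$, contradicting $g(b) - g(a) < 0$ and completing the proof.

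The only subtlety is that the ordinary Mean Value Theorem is not directly available, because $f$ is merely right-differentiable rather than differentiable, so the usual Rolle-type argument via an interior extremum of $f(x) - Mx$ does not apply cleanly. The closed-set supremum argument above is the natural one-sided substitute, and no serious obstacle is anticipated beyond keeping track of the strict versus non-strict inequalities when passing from the pointwise bound on $f'_+$ to the additive slack $\eps$ needed to close the contradiction.
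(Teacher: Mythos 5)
Your proof is correct and follows essentially the same strategy as the paper: argue by contradiction and run a supremum/closed-set argument on the set where the function stays above a comparison value, using the right derivative to rule out the supremum being interior. The only cosmetic difference is that the paper normalises to $a=0$, $b=1$, $f(a)=f(b)=0$ and shows its set is ``open to the right'' with an $\eps/2$ buffer, whereas you subtract a linear function and extract the contradiction from the sign of the difference quotients at the supremum; both are valid instances of the same one-sided mean value argument.
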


\begin{proof}
Without loss of generality we may assume
$a=0$, $b=1$ and $f(0)=f(1)=0$.
Suppose for contradiction we have 
$f'_+(x) \ge \eps>0$ for all $x \in [0,1]$.
Let $B = \{x: f(x) \ge \eps x/2\}$.
As $f(1)=0$ we have $1 \notin B$.
As $f$ is continuous, $B$ is closed.
The required contradiction will thus follow
if we show that $B$ is open to the right,
i.e.\ for any $x \in B$ there is $\dD>0$
such that $(x,x+\dD) \sub B$.
To see this, note that for small enough $\dD$,
by definition of $f'_+(x)$ we have
$f(y) \ge f(x) + (y-x)\eps/2 \ge \eps y/2$
for any $y \in (x,x+\dD)$, so $y \in B$.
\end{proof}
\subsection{Stability}

In this final subsection we prove our theorem on
stability for vertex isoperimetry in $G_B$.

For convenience we work with sumsets,
which is an equivalent setting via the identity
$\pl_{v,G_B}(n) = |A+(B \cup \{0\})| - |A|$ 
for $A,B \subseteq \bZ^d$.

\begin{proof}[Proof of Theorem \ref{thm:lattice-stability}]
Let $B$ be a generating set of $\bZ^d$ with $d \ge 2$,
and assume without loss of generality that $0 \in B$.
Suppose $K=K(B,d)$ is sufficiently large
and $A \sub \bZ^d$ is such that $|A|=n \ge K$ 
and $|A+B| \le \alpha |A|$, where
\begin{align*}
\alpha = 1+(1+\eps)\beta n^{-1/d}, 
\ \text{ with } \
\beta =  d\mu(C(B))^{1/d}
 \ \text{ and } \
Kn^{-1/2d} < \eps < K^{-1}.
\end{align*}
We need to find $v \in \bZ^d$ with
$|A \sd (v+B_n)| < Kn\sqrt{\eps}$.

By Lemma \ref{lem:ruz1}, there are $p \in \bN$, $z \in \bZ^d$
and a nice fundamental set $Q \sub Z(B)$ such that
$k C(B) + Q + z \sub \Ss_{k+p}(B) + Q$,
where we choose $k=\bcl{n^{1/2d}}$.
By Lemma \ref{stat:ruz1}, 
there is $\es \ne A' \sub A$ with
$|A' + \Ss_{k+p}(B)| \le \alpha^{k+p}|A'|$.
It now suffices to prove the following claim.

\begin{claim}\label{c1} 
We have $|A'| \ge (1+2\eps)^{-d} n$ and 
$|A'\sd (v+B_n)| \le \tfrac12 Kn\sqrt{\eps}$
for some $v\in \bZ^d$.
\end{claim}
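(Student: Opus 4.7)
The plan is to combine Brunn--Minkowski (for the lower bound on $|A'|$) with the Figalli--Maggi--Pratelli stability result (Theorem \ref{shouldersofgiants}), by applying the latter to a suitable continuous ``thickening'' of $A'$ and then converting its measure-theoretic output back to a lattice statement.

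For the lower bound on $|A'|$, I would use that $Q$ is fundamental (so $\mu(A'+Q) = |A'|$) together with Lemma \ref{lem:ruz1} and translation-invariance of $\mu$ to get
\[ \mu(A' + Q + kC(B)) \le \mu(A' + Q + \Ss_{k+p}(B)) = |A' + \Ss_{k+p}(B)| \le \alpha^{k+p}|A'|. \]
Applying Brunn--Minkowski to $A'+Q$ and $kC(B)$ then gives $\alpha^{(k+p)/d}|A'|^{1/d} \ge |A'|^{1/d} + k\mu(C(B))^{1/d}$; a routine Taylor expansion using $k = \lceil n^{1/2d}\rceil$, $p=O(1)$ and $\eps > Kn^{-1/2d}$ yields $|A'| \ge (1+2\eps)^{-d} n$ when $K$ is large.

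For the approximation, I would consider the nondecreasing function $f(r) := \mu(A' + Q + rC(B))$ on $[0,k]$, with $f(0) = |A'|$ and $f(k) \le \alpha^{k+p}|A'|$ by the display above. Formula \eqref{perim} identifies $f'_+(r)$ with $\Per_{C(B)}(\overline{A' + Q + rC(B)})$, so Lemma \ref{basicanalysis} supplies some $r \in [0,k)$ with $\Per_{C(B)}(E) \le (\alpha^{k+p}-1)|A'|/k + \eps'$ for $\eps'$ arbitrarily small, where $E := \overline{A' + Q + rC(B)}$. Another Taylor expansion then gives $\Per_{C(B)}(E) \le (1+O(\eps))\beta n^{-1/d}|A'|$, and since $\mu(E) \ge |A'| \ge (1+2\eps)^{-d}n$, the isoperimetric deficit satisfies $\delta_{C(B)}(E) = O(\eps)$. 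Applying Theorem \ref{shouldersofgiants} to the interior of $C(B)$ (which has the same perimeter) then supplies $x_0 \in \bR^d$ and $r_0>0$ with $r_0^d \mu(C(B)) = \mu(E)$ and $\mu(E \sd (x_0 + r_0 C(B))) = O(\sqrt{\eps})\,n$.

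To finish, I would take $v \in \bZ^d$ closest to $x_0$, so $|x_0 - v| = O(1)$, and use the fundamental property to write $|A' \sd (v+B_n)| = \mu((A'+Q) \sd (v+B_n+Q))$, then bound this via the triangle inequality through the chain $A'+Q \sub E$, $x_0 + r_0 C(B)$, $v + \kappa_B(n) C(B)$, $v + B_n + Q$. The individual discrepancies are $\mu(E) - |A'| \le (\alpha^{k+p}-1)n = O(\eps)n$; the FMP bound $O(\sqrt{\eps})n$; and two geometric boundary terms of order $O(n^{1-1/2d}) + O(n^{1-1/d})$, coming from $|r_0 - \kappa_B(n)| = O(n^{1/2d})$, $|x_0 - v| = O(1)$ and the lattice discretisation of $\kappa_B(n) C(B)$ to $B_n$. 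Since $\eps > Kn^{-1/2d}$ with $d \ge 2$ gives $\sqrt{\eps} \gg n^{-1/d}$ and $\sqrt{\eps} > \eps$, each term is $O(\sqrt{\eps})\,n$, summing to at most $\tfrac12 Kn\sqrt{\eps}$ once $K$ is taken large. The main obstacle is precisely this last step: Theorem \ref{shouldersofgiants} yields only measure-theoretic closeness to a translate of $r_0 C(B)$, and converting it to lattice closeness of $A'$ to $v+B_n$ demands the careful simultaneous handling of all the above discrepancies within the $\sqrt{\eps}\,n$ budget.
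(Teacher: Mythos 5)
Your proposal is correct and follows essentially the same route as the paper: Brunn--Minkowski through the fundamental domain $Q$ for the lower bound on $|A'|$, then Lemma \ref{basicanalysis} applied to $f(r)=\mu(A'+Q+rC(B))$ to extract a slice of small anisotropic perimeter, Theorem \ref{shouldersofgiants} for the asymmetric index, and a transfer back to the lattice via $\mu(X+Q)=|X|$ and an $O(1)$-enlargement of the cone. The only quibble is your claim $|r_0-\kappa_B(n)|=O(n^{1/2d})$, which can in fact be as large as $O(\eps n^{1/d})$; the corresponding volume discrepancy is still $O(\eps n)\le O(\sqrt{\eps}\,n)$, so the conclusion is unaffected.
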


To see the bound on $|A'|$, 
we use the choice of $Q$
and Brunn--Minkowski to get
\begin{align*}
\alpha^{k+p}|A'| & \ge |A'+ \Ss_{k+p}(B)|
 = \mu(A'+ \Ss_{k+p}(B)+Q)
\ge  \mu(A'+k C(B) + Q) \\
&\ge \big( \mu(A'+Q)^{1/d} + \mu(k C(B))^{1/d} \big)^d
=  (|A'|^{1/d}+k\mu(C(B))^{1/d})^d.
\end{align*}
Expanding the last expression and dividing throughout by $|A'|$ then gives
\begin{align*}
1+k\beta |A'|^{-1/d} & \le \alpha^{k+p}
= 1 + (1+\eps)k\beta n^{-1/d} + O(n^{-1/d})
< 1 + (1+2\eps)k\beta n^{-1/d},
\end{align*} 
from which the first part of the claim follows.

For the second part of the claim, 
let $A_r := \overline{A'+Q+rC(B)}$ and $f(r)=\mu(A_r)$.
Since $Q$, and so $A_r$, is nice, by \eqref{perim} we have 
$\Per_{C(B)}(A_r) = f'_+(r)$ for all $r \geq 0$.

Now $f(k)-f(0)  < (\alpha^{k+p}-1)|A'| 
< (1+2\eps)k\beta n^{-1/d}|A'|$,
so by Lemma \ref{basicanalysis} with $\eps=1$
there is an $r\in[0,k)$  such that
\[
\Per_{C(B)}(A_r) \le \frac{f(k)-f(0)}{k} + 1 
< (1+3\eps) \beta n^{-1/d} |A'|.
\]
Then, by Theorem \ref{shouldersofgiants}, the asymmetric index 
$\cA_{C(B)}(A_r)$ of $A_r$ with respect to $C(B)$
is at most $D\sqrt{3\eps}$.
Thus, there is $t\in \bR^d$ such that
\[
 \mu\big( A_r \sd (t+r' C(B)) \big)
 \le D\sqrt{3\eps} \mu(A_r),
\]
where $r'  = (\mu(A_r)/\mu(C(B))^{1/d}$.
As $\mu(A_r) \leq \mu(A_k) \leq \alpha^{k+p}|A'|$, we have 
\[ r' < q := (\alpha^{k+p} |A'|/\mu(C(B))^{1/d}.\]
Since $Q \sub Z(B)$, by increasing $D$ if necessary 
we may assume that $Q \sub DC(B)$.
Let $v$ be the unique lattice point in  $t + Q$.
We will show that $v$ satisfies the claim.

To see this, we start by applying 
the triangle inequality to get
\begin{equation} \label{trineq}
|A' \sd (v + B_n)| \leq \mu((A' + Q) \sd (t + qC(B))) 
+ \mu((t + qC(B)) \sd (v + B_n + Q)).
\end{equation}
We will use the inequality
$\mu(X \sd Y) \le 2\mu(X \sm Y) + |\mu(X) - \mu(Y)|$
for any measurable $X$ and $Y$
to estimate both terms on the right of \eqref{trineq}.
Using 
\[
\mu\big( (A'+Q) \sm (t+qC(B)) \big)\le \mu\big( A_r \sm (t+r'C(B)) \big)\ \le D\sqrt{3\eps}\alpha^{k+p}|A'|,
\]
we bound the first term as
\begin{align*}
\mu\big((A' + Q) \sd (t + qC(B))\big) 
&\le 2\mu\big((A' + Q) \sm (t + qC(B))\big) 
+ |\mu\big(qC(B))\big) - |A'||
\\&\le \big(2D\sqrt{3\eps}\alpha^{k+p} + \alpha^{k+p} - 1\big)|A'|
\le 4D\sqrt\epsilon n,
\end{align*}
since $\alpha^{k+p}=1+O(n^{-1/2d})$.
For the second, we observe that
\begin{align*}
\mu\big((v + B_n + Q)\sm (t + qC(B)) \big) 
&\leq \mu\big((t + B_n + 2Q)\sm (t + qC(B)) \big)
\\ & \leq \mu\big((\kappa_B(n) + 2D)C(B))\sm qC(B) \big),
\end{align*}
so the second term 
on the right of  \eqref{trineq} is $O(n^{1-1/2d})$,
as $|B_n|$, $\mu((\kappa_B(n) + 2D)C(B))$
and $\mu(qC(B))$ are all $n + O(n^{1-1/2d})$.
This proves the claim, and so the lemma.
\end{proof}

\section{Edge isoperimetry}

In this short section we deduce 
our stability result for edge isoperimetry
from our stability result for vertex isoperimetry
proved in the previous section .
We use the reduction in \cite{BE18}.

\begin{proof}[Proof of Theorem \ref{thm:edge-general}]
Let $B$ be a generating set of $\bZ^d$ 
with $d \ge 2$. We adopt a parameter hierarchy
$n^{-1} \ll \eps \ll s^{-1} \ll \gG \ll \dD \ll |B|^{-1}$,
i.e.\ let $\dD$ be small given $|B|$,
let $\gG$ be small given $\dD$,
let $s$ be large given $\gG$,
let $\eps$ be small given $s$,
and let $n$ be large given $\eps$.
Suppose $A \sub \bZ^d$ is such that $|A|=n$ 
and $\pl_{e,G_B}(A) \le \beta n^{1-1/d}(1+\eps)$,
where $\beta = d\mu(Z)^{1/d}$
with $Z = Z(B) = C([B])$.
We need to find $v \in \bZ^d$ with
$|A \sd (v+[B]_n)| < \dD n$.

From the proof of \cite[Theorem 1]{BE18},
identifying our parameters with theirs 
via $\eta=\gG/2$ and $s=(1-\eta)t/k$, 
we have $|A+\Ss_s([B])| - |A| 
< (1+\gG) s \beta n^{1-1/d}$. As
\begin{align*}
|A+\Ss_s([B])| - |A|
= \sum_{j=0}^{s-1} |A+\Ss_{j+1}([B])| - |A+\Ss_j([B])|, 
\end{align*} 
we can therefore fix $A_+ = A+\Ss_j([B])$ 
for some $j<s$ such that
\[ |A_+ + [B]| - |A_+| < (1+\gG) \beta n^{1-1/d}
\le (1+\gG) \beta n_+^{1-1/d},\]
where $n_+=|A_+| \le n + O(n^{1-1/d})$.
By Theorem \ref{thm:lattice-stability}
we have $|A_+ \sd (v+B_{n_+})| < \tfrac12 \dD n_+$
for some $v \in \bZ^d$. Now $|A \sd (v + [B]_n)| 
< |A_+ \sd (v + [B]_{n_+})| + |A_+ \sm A|
+ |\bZ \cap [B]_{n_+}| - |\bZ \cap [B]_n| < \dD n$.
\end{proof}

Our proof of Theorem \ref{thm:edge-general} 
does not give a tight quantitative result,
but we will now demonstrate a simple trick 
that provides such a result
when the generating set $B$ 
takes the form $\{ \pm v: v \in {\cal B} \}$ 
for some integral basis ${\cal B}$ of $\bZ^d$
(which may as well be the standard basis
$\{e_1,\dots,e_d\}$).

\begin{thm}\label{thm:edge-box}
Let  $B = \{\pm e_i:i\in [d]\} \sub \bZ^d$ with $d\ge 2$.
Then there is $K \in \bN$ so that 
for any $A \sub \bZ^d$ such that $|A|=n \ge K$ and 
$\pl_{e,G_B}(A) \le 2dn^{1-1/d}(1+\eps)$,
where $Kn^{-1/2d} < \eps < K^{-1}$, 	
there exists $v \in \bZ^d$ with
$|A \sd (v+[B]_n)| < Kn\sqrt{\eps}$.		
\end{thm}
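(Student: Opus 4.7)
The plan is to reduce to a continuous anisotropic isoperimetric problem and apply Theorem \ref{shouldersofgiants}, taking advantage of the fact that $Z = Z(B) = [-1,1]^d$ is itself a cube. Set $A' := A + [0,1)^d \sub \bR^d$, a disjoint union of $n$ half-open unit cubes, so that $\mu(A') = n$. The crucial identity is $\Per_Z(A') = \pl_{e,G_B}(A)$. To see this, as $\eps \ra 0^+$ each unit cube in $A'$ contributes $2d\eps + O(\eps^2)$ to $\mu(A' + \eps Z)$, while each unordered adjacent pair $\{u, u+e_i\} \sub A$ produces a slab of double-counted volume $2\eps + O(\eps^2)$ in the expansion and all other overlaps contribute only $O(\eps^2)$; taking $\eps \ra 0^+$ yields $\Per_Z(A') = 2dn - 2m$, where $m$ is the number of undirected $G_B$-edges inside $A$, and a direct count gives $\pl_{e,G_B}(A) = 2dn - 2m$.

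With this identity the hypothesis becomes $\dD_Z(A') \le \eps$. Applying Theorem \ref{shouldersofgiants} with the open interior of $Z$ in place of $K$ (which has identical measure and perimeter) produces $x_0 \in \bR^d$ and $r > 0$ with $(2r)^d = n$ such that $\mu(A' \sd (x_0 + rZ)) \le D\sqrt{\eps}\, n$ for some $D = D(d)$. To transfer back to the lattice, put $k = \kK_{[B]}(n)$, so that $[B]_n = \{-k,\dots,k\}^d$ and $[B]_n + [0,1)^d = [-k,k+1)^d$ is a cube of side $2k+1 \in [n^{1/d}, n^{1/d}+2]$. Choose $v \in \bZ^d$ so that the centre of $v + [B]_n + [0,1)^d$ lies within $\sqrt{d}$ of the centre of $x_0 + rZ$; the two cubes then have sides and centres agreeing to within $O(1)$, so their symmetric difference has measure at most $Cn^{1-1/d}$ for some $C = C(d)$.

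Since $A'$ and $v + [B]_n + [0,1)^d$ are unions of disjoint unit cubes indexed by $A$ and $v + [B]_n$ respectively, $|A \sd (v + [B]_n)| = \mu(A' \sd (v + [B]_n + [0,1)^d))$; the triangle inequality therefore gives $|A \sd (v + [B]_n)| \le D\sqrt{\eps}\, n + Cn^{1-1/d}$. The conditions $Kn^{-1/2d} < \eps < K^{-1}$ imply $n^{1-1/d} \le n\sqrt{\eps}$ for $K$ large, so the conclusion $|A \sd (v + [B]_n)| < Kn\sqrt{\eps}$ holds provided $K \ge D + C + 1$. The main obstacle is the perimeter identity in the first step: this is where the special structure of $B$ is used, since it is precisely the cubical shape of $Z(B) = [-1,1]^d$ matching the unit-cube thickening that links the discrete quantity $\pl_{e,G_B}(A)$ to the continuous $\Per_Z(A')$ with the \emph{correct} isoperimetric constant $d\mu(Z)^{1/d} = 2d$; once this identification is made, the remaining rounding computation is routine.
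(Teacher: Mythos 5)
Your proposal is correct and follows essentially the same route as the paper: both thicken $A$ by a unit cube so that $\pl_{e,G_B}(A) = \Per_Z(A')$ with $Z=[-1,1]^d$, apply Theorem \ref{shouldersofgiants}, and round the resulting optimal cube back to a translate of $[B]_n$ at a cost of $O(n^{1-1/d}) \le n\sqrt{\eps}$. The only differences are cosmetic: you justify the perimeter identity by a first-order volume expansion rather than the paper's facet-counting bijection, and you bound the symmetric difference of the two cubes directly rather than nesting one inside a slightly enlarged copy of the other.
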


\begin{proof}
Let $A' = A + [-1/2,1/2]^d$. Then the edges of $G_B$
counted by $\pl_{e,G_B}(A)$ are in bijection
with those $(d-1)$-cubes that occur exactly once
as $x + C$ with $x \in A$ and 
$C$ a facet of $[-1/2,1/2]^d$.
Thus $\pl_{e,G_B}(A) = \Per_Z(A')$,
where $Z=Z(B)=[-1,1]^d$.
By Theorem \ref{shouldersofgiants}
we have $\cA_Z(A') \le D\sqrt{\eps}$,
i.e.\ there is $x \in \bR^d$ with
$\mu(A' \sd (x+rZ)) \le nD\sqrt{\eps}$,
where $\mu(rZ)=(2r)^d=n$.
We fix $v \in \bZ^d$ with 
$x+(r+1)Z \sub v + (r+2)Z$.
As \[(A \sm (x+(r+1)Z)) + [-1/2,1/2]^d 
\sub A' \sm (x+rZ)\] we have
$|A \sm (v+(r+2)Z)| \le nD\sqrt{\eps}$.
The theorem now follows from
\begin{align*}
A \sd (v+[B]_n)| 
= 2|A \sm (v+(r+2)Z)| + O(n^{1-1/d})
< Kn\sqrt{\eps}. 
&  \qedhere
\end{align*}
\end{proof}

We remark that by considering
$A' = A + [-1/2,1/2]^d$ as in the previous proof
one can obtain a bound for the 
edge isoperimetric problem in $G_B$
that is tight in some cases.
Indeed, by the anisotropic isoperimetric inequality,
$\pl_{e,G_B}(A) = \Per_Z(A') 
\ge d\mu(Z)^{1/d}\mu(A')^{1-1/d}
= 2d n^{1-1/d}$,
which is tight whenever 
$n=k^d$ for some $k \in \bN$.
Bollob\'as and Leader \cite{BL91}
gave a tight result for general $n$,
although they used compression techniques that
alter structure, so it is interesting that
exact results can also be obtained by geometric methods,
and moreover in two ways:\ by the argument above,
or by the Loomis-Whitney inequality as in \cite{EFKY}.

\section{Concluding Remarks}\label{waffle}

As mentioned in the introduction,
there are several challenging 
and important open problems
in isoperimetric stability,
such as the Kahn--Kalai Conjecture
and the Polynomial Freiman--Ruzsa Conjecture.
We therefore find it rather striking 
that in this short paper we have been able
to characterise isoperimetric stability
for general Cayley graphs in lattices.
Of course, the brevity of our paper masks
the fact that we have greatly relied 
on previous work, particularly an analogous
stability result of \cite{FMP10}
in Geometric Measure Theory.
This naturally suggests that further investigation
of transformations between 
the discrete and continuous settings
may be fruitful in future research.
 
For the isoperimetric problems considered in this paper,
it is natural to ask if one can obtain tighter estimates
than those in the asymptotic results of \cite{BE18,R95},
particularly for the edge boundary, where the probabilistic
reduction in \cite{BE18} introduces error terms 
that are presumably far from optimal. 
Does our improved estimate
for the edge isoperimetric inequality 
in the remark following Theorem \ref{thm:edge-box} 
hold for general $B$, i.e.\ do we always have
$\pl_{e,G_B}(A) \ge d\mu(Z(B))^{1/d} |A|^{1-1/d}$?
Do we always have
$\pl_{v,G_B}(A) \ge d\mu(C(B))^{1/d} |A|^{1-1/d}$?

It would also be interesting 
to quantify the dependence on the dimension $d$
of the constants $K$ in our theorems.
Our use of \cite{FMP10} gives $K=O(d^7)$,
whereas in the case of the $\ell^1$-grid
the authors of \cite{EFKY} show $K=O(d^{5/2})$
and conjecture $K=O(\sqrt{d})$.

Finally, a challenging direction for further research 
is to understand the structure of large sets $A$ for which
$|A+B|-|A|$ is within a multiplicative
$O(1)$ factor of its minimum value. We conjecture that
any such $A$ can be covered by $O(1)$ homothetic 
copies of $C(B)$ with total volume $O(|A|)$. 

More generally, in the spirit
of the Kahn--Kalai and Polynomial Freiman--Ruzsa Conjectures,
we pose the following (somewhat vague) question
for any (natural) isoperimetric problem:\
can any set with boundary $O(\cdot)$ of the minimum possible
be covered by $O(1)$  `canonical examples' 
of size $O(|A|)$, perhaps even with 
polynomially-related constants?

\bibliographystyle{abbrv}
\bibliography{isoperimetric}
\end{document}